\newenvironment{dedication}
{\vspace{0ex}\begin{quotation}\begin{center}\begin{em}}
        {\par\end{em}\end{center}\end{quotation}}
\theoremstyle{plain}
\newtheorem{thm}{Theorem}[section]
\newtheorem{lem}{Lemma}[section]
\newtheorem{cor}{Corollary}[section]
\newtheorem{thma}{Theorem}
\theoremstyle{proof}
\numberwithin{equation}{section}
\begin{document} 
\title[A class of Lebesgue-Ramanujan-Nagell equations]{On a class of Lebesgue-Ramanujan-Nagell equations}
\author{Azizul Hoque}
\address{Department of Mathematics, Faculty of Science, Rangapara College, Rangapara, Sonitpur-784505, Assam, India.}
\email{ahoque.ms@gmail.com}
\keywords{Diophantine equation, Lehmer number, Fibonacci number, Lucas number, Primitive divisor}
\subjclass[2010] {Primary: 11D61, 11D41; Secondary: 11B39, 11Y50}
\date{\today}
\maketitle

\begin{dedication}
To Professor Michel Waldschmidt on his $75^{th}$ birthday with great respect
\end{dedication}

\begin{abstract}
We deeply investigate the Diophantine equation $cx^2+d^{2m+1}=2y^n$ in integers $x, y\geq 1, m\geq 0$ and $n\geq 3$, where $c$ and $d$ are coprime positive integers satisfying $cd\not\equiv 3 \pmod 4$. We first solve this equation for prime $n$, under the condition $\gcd(n, h(-cd))=1$, where $h(-cd)$ denotes the class number of the imaginary quadratic field $\mathbb{Q}(\sqrt{-cd})$. We then completely solve this equation for both $c$ and $d$ primes, under the assumption $\gcd(n, h(-cd))=1$. We also completely solve this equation for $c=1$ and $d\equiv 1 \pmod 4$, under the condition $\gcd(n, h(-d))=1$. For some fixed values of $c$ and $d$,  we derive some results concerning the solvability of this equation.  
\end{abstract}

\section{Introduction}
Many special cases of the Lebesgue-Ramanujan-Nagell type equation 
\begin{equation}\label{eqlrn}
x^2+d=2y^n, ~~  x, y\geq 1, \gcd(x,y)=1, n\geq 3,
\end{equation}
where $d\geq 1$ is an integer,  have been considered over the years. In 1895, St\"ormer \cite{ST95} proved using an elementary factorization argument  that for odd $n$ and $d=1$, \eqref{eqlrn} has no solution with $y>1$. Later, Cohn extended this result in \cite{CO96}, and  proved that $(x,y,n)=(239,13,4)$ is the only solution of \eqref{eqlrn} with $y>1$ when $d=1$. In \cite{PT00},  Pink and Tengely considered \eqref{eqlrn} for $d=a^2$ and gave an upper bound for the exponent $n$ depending only on $a$. Moreover, they completely solved it when $1 < a < 1000$ and $3< n< 80$. For $d=a^2$ with odd $a$ and $3\leq a\leq 501$, Tengely \cite{TE04} extended the work of \cite{PT00} and completely solved \eqref{eqlrn} when $n$ is prime. On the other hand, for $d=q^{2m}$ with $m\geq 1$ and prime $q\geq 3$, Tengely \cite{TE07} established some bounds for prime $n$. Applying these bounds, he completely solved \eqref{eqlrn} for $d=q^{2m}$ and $ y=17$ (resp. $d=3^{2m}$) with $m\geq 1$ and prime $q\geq 3$. Later in \cite{ZLT12}, Zhu et al. extended the work of Tengely \cite{TE07}, and described all possible solutions of \eqref{eqlrn} for $d=q^{2m}$ with $q$ an odd prime and $m\geq 1$. Muriefah et al. investigated \eqref{eqlrn} in \cite{MLST09}, and they completely solved it when $d\equiv 1\pmod 4$ and less than $100$. On the other hand, Ljunggren \cite{LJ66} considered a general Lebesgue-Ramanujan-Nagell equation,
\begin{equation}\label{eql}
cx^2+d=2y^n,~~ x, y\geq 1, n\geq 3, \gcd(cx, y)=1, \gcd(h(-cd), n)=1,
\end{equation}
where $c$ and $d$ are fixed positive integers, and $h(\Delta)$ denotes the class number of the quadratic field $\mathbb{Q}(\sqrt{\Delta})$. He described the solutions of some very particular cases of \eqref{eql}, under certain conditions on  $c, d$ and $n$. In \cite{GM20},  Ghanmi and Abu Muriefah extended this work of Ljunggren by utilizing the classical results on the primitive divisors of Lehmer sequence due to Bilu, Hanrot and Voutier (see \cite{BH01} and the references therein). Precisely, they investigated the solvability of \eqref{eql} in positive integers $x, y$ and odd prime number $n$, when $cd\not\equiv 3\pmod 4$ and $cd$ is square-free.  

The purpose of this paper is to perform a deep investigation on the Labesgue-Ramanujan-Nagell type equation 
\begin{equation}\label{eqg}
cx^2+d^{2m+1}=2y^n,~~ x, y\geq 1, m\geq 0, n\geq 3,
\end{equation}
where $c$ and $d$ are given positive integers with square-free $cd$. This work generalizes some of the earlier works; for instance, the work of Ljunggren \cite{LJ66}, and that of Ghanmi and Abu Muriefah \cite{GM20}. In particular, we prove three theorems concerning the solvability of \eqref{eqg}, and then we derive some consequences for certain fixed values of $c$ and $d$.  The first theorem generalizes the work of Ghanmi and Abu Muriefah \cite{GM20}. Note that the elegant results of Bilu et al. \cite{BH01} and Voutier \cite{VO95} on the existence of primitive divisors of Lehmer numbers have turned out to be extremely powerful tools in the proofs of our results too. The  factorization argument used in  \cite{GM20}  is based on the work of Ljunggren \cite{LJ66}, which was later  presented in a better way by Yuan in \cite{YU05}. We utilize this work of Yuan.     
We first solve \eqref{eqg} for odd prime $n$ under the conditions $\gcd(h(-cd), n)=1$ and $cd\not\equiv 3\pmod 4$. Precisely, we prove:

\begin{thm}\label{thm}
Let $c$ and $d$ be positive integers such that $cd$ is square-free and $cd\not\equiv 3\pmod 4$. Assume that $p$ is an odd prime such that $p\nmid h(-cd)$. 
\begin{itemize}
\item[(I)]
The Diophantine equation
\begin{equation}\label{eqp}
cx^2+d^{2m+1}=2y^p
\end{equation} 
has no positive integer solution $(x,y,m,p)$  for $p>7$, except for $(x,y)=(1,1)$.
\item[(II)] If $p=7$, then \eqref{eqp} has only one solution, viz. $(x,y,m)=(1169,9, 0)$ with $(c,d)=(7, 11)$, provided $(x,y)\ne(1,1)$.
\item[(III)] If $p=5$ and $cd\ne 1$, then for any $m\geq 0$ the solutions of \eqref{eqp} are  given by 
\begin{align*}
(c, d, x,y)\in&\Big\{ \left(F_{k-2\varepsilon}/2u^2, L_{k+\varepsilon}/2v^2, u\left((2F_k-F_{k-2\varepsilon})^2+F_kL_{k+\varepsilon}\right), F_k\right), \nonumber\\ &
\left(L_{t-2\varepsilon}/2u^2, 5F_{t+\varepsilon}/2v^2, u\left( (2L_t-L_{t-2\varepsilon})^2+5L_tF_{t+\varepsilon}\right), L_t\right)\Big\},
\end{align*}
where $u$ and $v$ are suitable coprime odd positive integers, $k\geq 3$ and $t\geq 2$ are integers, $F_k$ (resp. $L_k$) is $k$-th Fibonacci (resp. Lucas) number and $\varepsilon=\pm 1$.  
\item[(IV)] If $p=3$ and $cd\ne 1$, then the solutions of \eqref{eqp} are given by
$$
(c, d, x, y, m)=\left(c, \frac{3u^2c-2}{v^2}, u(4u^2c-3), 2u^2c-1, m\right), 
$$
where $v=d^m$, $u\geq 1$  is an odd integers and $k\geq 2$ is an integer. 
\end{itemize}
\end{thm}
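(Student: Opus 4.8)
The plan is to run, uniformly for every odd prime $p$, the classical descent in the imaginary quadratic field $K=\mathbb{Q}(\sqrt{-cd})$, to reduce the problem to a divisibility statement about a Lehmer number, and then to invoke the primitive divisor theorems of \cite{BH01} and \cite{VO95}; the exponents $p=3,5,7$ survive precisely because the Lehmer pairs that turn up may be defective, and the defective pairs are the source of the families in (II)--(IV). I begin with bookkeeping. Reducing \eqref{eqp} modulo $8$, square-freeness of $cd$ together with $cd\not\equiv3\pmod4$ forces $c$ odd, gives $cd\equiv1\pmod4$ with $x,y$ odd when $d$ is odd, and $cd\equiv2\pmod4$ when $d$ is even; in all cases $-cd\not\equiv1\pmod4$, so $\mathcal{O}_K=\mathbb{Z}[\sqrt{-cd}]$ and both $2$ and every prime dividing $cd$ ramify. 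One also records $\gcd(x,2d)=\gcd(y,2cd)=1$. Multiplying \eqref{eqp} by $c$ gives $(cx)^2+cd\,(d^m)^2=2c\,y^p$, which factors in $\mathcal{O}_K$ as
\[
\bigl(cx+d^m\sqrt{-cd}\bigr)\bigl(cx-d^m\sqrt{-cd}\bigr)=2c\,y^p .
\]
(The case $d$ even is entirely parallel after dividing out a fixed power of $2$; the degenerate case $cd=1$, i.e.\ $c=d=1$, is the classical equation $x^2+1=2y^p$ settled by St\"ormer \cite{ST95} and Cohn \cite{CO96}, which accounts for the exception $(x,y)=(1,1)$.)

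Next comes the ideal analysis. The greatest common divisor of the two conjugate factors divides $\bigl(2cx,\,2d^m\sqrt{-cd}\bigr)$, hence is supported only on the ramified prime $\mathfrak p_2$ above $2$ and on the ramified primes above $c$; a valuation count shows each of these divides $cx+d^m\sqrt{-cd}$ exactly once, yielding the ideal equation $\bigl(cx+d^m\sqrt{-cd}\bigr)=\mathfrak p_2\,\mathfrak c\,\mathfrak y^p$, where $\mathfrak c:=\prod_{q\mid c}\mathfrak q$ satisfies $\mathfrak c^2=(c)$, $\mathfrak p_2^2=(2)$, and $\mathfrak y$ has norm $y$. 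Since $N(a+b\sqrt{-cd})=a^2+cdb^2$ never equals $2c$ for $c$ odd and $cd>1$, the ideal $\mathfrak p_2\mathfrak c$ is non-principal of order exactly $2$ in the class group; combining $[\mathfrak p_2\mathfrak c][\mathfrak y]^p=1$ with $\gcd(p,h(-cd))=1$ forces $[\mathfrak y]=[\mathfrak p_2\mathfrak c]$, so $\mathfrak p_2\mathfrak c\,\mathfrak y$ and $\mathfrak y^2$ are principal, say $(\pi_1)$ and $(\rho)$, with $\pi_1^2=\pm\,2c\,\rho$. Writing $\mathfrak p_2\mathfrak c\,\mathfrak y^p=(\mathfrak p_2\mathfrak c\,\mathfrak y)(\mathfrak y^2)^{(p-1)/2}$ and using that the unit group is $\{\pm1\}$ (valid since $cd\neq1,3$), one arrives at an equation of the shape
\[
(2c)^{(p-1)/2}\bigl(cx+d^m\sqrt{-cd}\bigr)=\pm\,\pi_1^{\,p},\qquad \pi_1=g+h\sqrt{-cd}\in\mathbb{Z}[\sqrt{-cd}],\ \ N(\pi_1)=2cy .
\]
Taking norms recovers $y$, and comparing the coefficients of $\sqrt{-cd}$ gives $(2c)^{(p-1)/2}d^m=\pm\,h\,\widetilde u_p$, where $\widetilde u_p=(\pi_1^p-\bar\pi_1^p)/(\pi_1-\bar\pi_1)$ is the Lehmer number of the pair $(\pi_1,\bar\pi_1)$; the $\gcd$-conditions making this a genuine Lehmer pair follow from $\gcd(x,y)=1$. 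In particular every prime divisor of $\widetilde u_p$ divides $2cd$.

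Now the endgame. A primitive divisor $\ell$ of $\widetilde u_p$ cannot divide $(\pi_1^2-\bar\pi_1^2)^2=16g^2h^2(-cd)$, hence $\ell\nmid 2cd$, contradicting the previous sentence unless $|\widetilde u_p|\le1$; so if $\widetilde u_p$ has any primitive divisor we are forced to $\widetilde u_p=\pm1$, which pins $|h|=(2c)^{(p-1)/2}d^m$, a severe size constraint leading back to $(x,y)=(1,1)$. By the Bilu--Hanrot--Voutier theorem \cite{BH01}, $\widetilde u_p$ has a primitive divisor for every $p>30$, and by Voutier \cite{VO95} the remaining primes $11\le p\le29$ contribute only an explicit finite list of defective pairs, each of which is eliminated using $cd\not\equiv3\pmod4$ and $p\nmid h(-cd)$; this settles (I). For $p\in\{7,5,3\}$ one runs through Voutier's defective pairs and, for each, solves the system $N(\pi_1)=2cy$, $(2c)^{(p-1)/2}d^m=\pm h\,\widetilde u_p$, $(2c)^{(p-1)/2}(cx+d^m\sqrt{-cd})=\pm\pi_1^p$ for $(c,d,x,y,m)$: for $p=7$ the unique surviving solution is $(7,11,1169,9,0)$, giving (II); for $p=5$ the defective pairs are exactly those whose data are consecutive Fibonacci/Lucas numbers, and the identities $F_kL_k=F_{2k}$, $L_k^2-5F_k^2=4(-1)^k$ convert them into the two families of (III); for $p=3$ the defective pairs form the one-parameter family $\widetilde u_3=\pm3^{\,j}$, and setting $\pi_1=(2cu^2-1)+u\sqrt{-cd}$ with $u$ odd produces $y=2cu^2-1$, $x=u(4cu^2-3)$, $d=(3cu^2-2)/v^2$ with $v=d^m$, which is (IV). Each family is then verified against \eqref{eqp} directly; for $p=3$, with $w=cu^2$, this is the identity $(2w-1)^3=8w^3-12w^2+6w-1$.

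The main obstacle is the second step. Because $\mathfrak p_2\mathfrak c$ is not principal, one cannot express $cx+d^m\sqrt{-cd}$ directly as a fixed element times a $p$-th power; one must descend through squares and trade $\mathfrak p_2\mathfrak c$ against a power of $\mathfrak y$, and then keep very precise control of the parity and the greatest common divisor of the coefficients $g,h$ of $\pi_1$ — this is where $cd\not\equiv3\pmod4$ is used in an essential way — so that $(\pi_1,\bar\pi_1)$ really is a Lehmer pair of index $p$. Once that reduction is clean, (I) is almost immediate, and the genuinely laborious part is the finite case analysis for $p\in\{3,5,7\}$: deciding which defective pairs of Voutier's tables are compatible with the rigid shape $N(\pi_1)=2cy$ imposed here, and checking that the resulting lists — one sporadic solution for $p=7$, two Fibonacci/Lucas families for $p=5$, a rational curve for $p=3$ — are complete.
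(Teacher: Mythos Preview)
Your high-level strategy---reduce \eqref{eqp} to the assertion that a certain Lehmer number has no primitive divisor and then invoke \cite{BH01,VO95}---is exactly the paper's. The difference is in how the reduction is carried out, and your version has a genuine gap at the key step.

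You work in $\mathbb{Z}[\sqrt{-cd}]$, produce $\pi_1=g+h\sqrt{-cd}$ with $N(\pi_1)=2cy$, and assert that ``the $\gcd$-conditions making this a genuine Lehmer pair follow from $\gcd(x,y)=1$.'' This is false: for any such $\pi_1$ one has $(\pi_1+\bar\pi_1)^2=4g^2$ and $\pi_1\bar\pi_1=2cy$, and these always share the factor $2$ (indeed, since $c\mid g^2$ and $c$ is square-free, they also share $c$). So $(\pi_1,\bar\pi_1)$ is neither a Lucas nor a Lehmer pair, and the primitive-divisor theorems of \cite{BH01,VO95} do not apply to it as stated. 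Your subsequent case analysis for $p\in\{3,5,7\}$, which matches your parameters $(4g^2,-4h^2cd)$ against Voutier's tables, is therefore comparing against the wrong list.

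The paper sidesteps this by not descending in $\mathbb{Z}[\sqrt{-cd}]$ at all. It quotes a result of Yuan \cite{YU05} (Lemma~\ref{leyu}) to write
\[
\frac{x\sqrt{c}+d^m\sqrt{-d}}{\sqrt{2}}=\delta\left(\frac{u\sqrt{c}\pm v\sqrt{-d}}{\sqrt{2}}\right)^{p},\qquad y=\frac{u^2c+v^2d}{2},
\]
and takes $\alpha=(u\sqrt{c}+v\sqrt{-d})/(\delta\sqrt{2})$, an algebraic integer in a biquadratic field. Now $(\alpha+\bar\alpha)^2=2u^2c$ and $\alpha\bar\alpha=y$ are coprime (since $y$ is odd and $\gcd(uc,y)=1$), so $(\alpha,\bar\alpha)$ \emph{is} a Lehmer pair, with parameters $(2u^2c,-2v^2d)$. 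The equation $\mathcal{L}_p(\alpha,\bar\alpha)=\delta d^m/v$ then shows directly that every prime divisor of $\mathcal{L}_p$ divides $d$, hence divides $(\alpha^2-\bar\alpha^2)^2=-2u^2v^2cd$; so $\mathcal{L}_p$ has no primitive divisor, full stop---there is no side case ``$|\widetilde u_p|\le 1$ leading back to $(x,y)=(1,1)$'' as in your write-up. With the correct parameters $(2u^2c,-2v^2d)$, the tables in \cite{BH01,VO95} dispose of $p>7$ immediately (the lone $p=13$ entry $(1,-7)$ cannot have first coordinate $2u^2c$), so no separate elimination for $11\le p\le 29$ is needed.

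Your ideal computation is salvageable: writing $g=cu$ (forced by $c\mid g$) and rescaling $\pi_1$ by $\sqrt{2c}$ produces precisely the paper's $\alpha$, and one checks it is an algebraic integer because $u^2c+v^2d=2y$ is even. But you must actually carry out that rescaling and re-verify the coprimality before invoking \cite{BH01,VO95}; as written, the Lehmer-pair claim is the gap.
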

\subsection*{Remarks} We make the following comments on Theorem \ref{thm}. 
\begin{itemize}
\item[(i)] There are many cases where both $F_{k+2}$ and $F_{k-2}$ are odd. In fact, both $F_{k+2}$ and $F_{k-2}$ are  odd when $3\mid k$. In such cases, $c=F_{k-2\varepsilon}/2u^2$ can not be an integer. Therefore the solutions mentioned in (III) may not be existed.  
\item[(ii)] The work of Ghanmi and Abu Muriefah \cite{GM20} follows from Theorem \ref{thm}. In particular, we get \cite[Theorem 1]{GM20} from Theorem \ref{thm} by putting $m=0$. Note that $(q, D, C) = (5, 11, 7)$ should be read as $(q, D, C) = (7, 11, 7)$ in \cite[Theorem 1]{GM20}. 
\end{itemize}
Theorem \ref{thm} yields the following straightforward corollary.
\begin{cor}\label{cor1}
Let $c\equiv 1\pmod 4$ be a square-free positive integer and $p $ an odd prime such that $p\nmid h(-c)$. Then
the Diophantine equation
$cx^2+1=2y^p $
has no positive integer solution $(x,y,p)$, except for $(x, y)=(1,1)$.
\end{cor}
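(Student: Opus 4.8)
The plan is to derive Corollary~\ref{cor1} by specialising Theorem~\ref{thm} to the case $d=1$. First I would check that the hypotheses transfer: with $d=1$ we have $cd=c$, which is square-free by assumption, and $c\equiv1\pmod4$ gives $cd\not\equiv3\pmod4$; moreover $h(-cd)=h(-c)$, so $p\nmid h(-cd)$. Since $1^{2m+1}=1$, the equation $cx^2+1=2y^p$ is exactly \eqref{eqp} with $d=1$ for every $m\geq0$, so all four parts of Theorem~\ref{thm} are available, and it suffices to show that in each regime of $p$ either there is no solution or $(c,x,y)=(1,1,1)$.

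For $p>7$, part~(I) leaves only $(x,y)=(1,1)$, and then $c+1=2$ forces $c=1$. For $p=7$, the unique exceptional solution in part~(II) has $d=11\neq1$, so again $(x,y)=(1,1)$ and $c=1$. For $p=3$: if $c\neq1$ then $cd=c\neq1$ and part~(IV) applies, with $v=d^m=1$, so $1=d=3u^2c-2$, i.e. $u^2c=1$, forcing $c=1$---a contradiction; hence there is no solution with $c\neq1$, and for $c=1$ the equation is $x^2+1=2y^3$, which by St\"ormer's theorem \cite{ST95} has no solution with $y>1$.

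The remaining and more delicate case is $p=5$ with $c\neq1$. Here $cd=c\neq1$, so part~(III) applies, and imposing $d=1$ forces either $L_{k+\varepsilon}=2v^2$ (first family) or $5F_{t+\varepsilon}=2v^2$ (second family). In the second family $5\mid v$, whence $F_{t+\varepsilon}=10w^2$ for an integer $w$, which is impossible by the known determination of Fibonacci numbers of the form $D\cdot(\text{square})$ (no Fibonacci number is ten times a perfect square); so the second family contributes nothing. In the first family, Cohn's determination of Lucas numbers of the form $2\cdot(\text{square})$ shows that the only admissible index is $k+\varepsilon=6$, giving the pairs $(k,\varepsilon)=(5,1)$ and $(k,\varepsilon)=(7,-1)$; the first yields $c=F_3/(2u^2)=1/u^2$, hence $c=1$, contrary to hypothesis, while the second yields $c=F_9/(2u^2)=17/u^2$, i.e. $(c,u)=(17,1)$, and a direct substitution of the corresponding explicit $x$ and $y=F_7=13$ into \eqref{eqp} shows that these parameters do not satisfy it, so they are not among the ``suitable'' ones. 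Thus no solution survives with $c\neq1$, and for $c=1$ the equation $x^2+1=2y^5$ has only $(x,y)=(1,1)$, again by St\"ormer. Assembling the four regimes proves the corollary; the only genuine work is the bookkeeping for $p=5$, where one must route the condition $d=1$ correctly through the Fibonacci/Lucas parametrisation of part~(III) and then eliminate the few surviving candidate indices.
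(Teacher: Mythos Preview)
Your overall approach---set $d=1$, check that the hypotheses of Theorem~\ref{thm} transfer, and run through parts (I)--(IV)---is exactly what the paper intends by calling this a ``straightforward corollary'', and your treatment of $p>7$, $p=7$, and $p=3$ is correct.

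There is, however, a gap in your handling of $p=5$. In the second family you reduce to $F_{t+\varepsilon}=10w^{2}$ and then assert that ``no Fibonacci number is ten times a perfect square''. This is true, but it is not covered by the results quoted in the paper: Theorem~\ref{thmCO} only deals with $F_k=2x^2$ and $L_k=2x^2$, and the paper does not cite Robbins or any other source for the case $F_k=10x^2$. So as written this step is unjustified. (If you want to keep this route, a short congruence argument does the job: since $v=5w$ must be odd, $w$ is odd, so $10w^{2}\equiv 10\pmod{16}$; but a check of the period-$24$ Fibonacci sequence modulo $16$ shows that $F_n\not\equiv 10\pmod{16}$ for any $n$.) Your elimination of the first-family candidate $(k,\varepsilon)=(7,-1)$ by direct substitution is valid but also somewhat ad hoc.

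A much cleaner way to dispose of the entire $p=5$ case, and almost certainly what the paper has in mind, is to use a constraint already present in the proof of Theorem~\ref{thm}: equation~\eqref{eqp3} gives $\mathcal{L}_5(\alpha,\bar\alpha)=\pm d^{m}/v$, so $v\mid d^{m}$. With $d=1$ this forces $v=1$, and then the two families collapse immediately: in the first family $L_{k+\varepsilon}=2v^{2}=2$ forces $k+\varepsilon=0$, impossible since $k\ge 3$; in the second family $5F_{t+\varepsilon}=2$ is impossible outright. This avoids both the unjustified Fibonacci claim and the direct verification for $c=17$.
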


The second result is about the integer solutions of \eqref{eqg}, when both $c$ and $d$ are distinct odd primes satisfying $cd\not\equiv 3\pmod 4$ and $n\geq 3$ is any integer other than powers of $2$. Particularly, we prove:

\begin{thm}\label{thm2}
Let $\ell$ and $q$ be two distinct odd primes such that $\ell q\not\equiv 3\pmod 4$. Suppose that $n>2$ is an integer which is not a power of $2$ and $\gcd(n, h(-\ell q))=1$. Then the following statements hold.
\begin{itemize}
\item[(i)]
The Diophantine equation
\begin{equation}\label{eqn}
\ell x^2+q^{2m+1}=2y^n
\end{equation} 
has no positive integer solution $(x,y,m,n)$  when $n$ has a prime divisor bigger than $7$.
\item[(ii)] If $7$ is the largest prime divisor of $n$, then the solutions of \eqref{eqn} exist only when $(\ell,q)=(7,11)$, which are:  $$(x,y,m, n)\in\{(1169,9, 0, 7), (1169,3, 0, 14)\}.$$
\item[(iii)] If $5$ is the largest prime divisor of $n$, then the solutions \eqref{eqn} are given by {\small{
$$\hspace{13mm}(x,y,n)=\left(\frac{1}{4}\left|\sqrt{\frac{F_{k-2\varepsilon}}{2\ell}} \left(F_{k-2\varepsilon}^2-\frac{5}{2}F_{k-2\varepsilon}L_{k+\varepsilon}+\frac{5}{4}L_{k+\varepsilon}^2\right)\right|,  \left(\frac{F_{k-2\varepsilon}+L_{k+\varepsilon}}{4}\right)^{\frac{1}{2^{r}3^{s}5^{t}}}, 2^{r}3^{s}5^{t+1}\right),$$}}
\hspace{-1.5mm}where $F_k$ and $L_k$ are respectively Fibonacci and Lucas numbers satisfying the negative Pell equation \eqref{5xxx} and $\ell, q, m$ are given by \eqref{5x}. Here $r, s$ and $t$ are suitable non-negative integers.
\item[(iv)] If $3$ is the largest prime divisor of $n$, then the solutions of \eqref{eqp} are given by
$$\hspace{13mm}(\ell, q,x,y,m, n)=\left(\frac{q^{2m+1}+2}{3u^2}, q, \frac{u(4q^{2m+1}-1)}{3},  \left(\frac{2q^{2m+1}+1}{3}\right)^{1/2^r3^t}, m, 2^r3^{t+1}\right),$$
where $u\geq 1$ is a suitable odd integer, and $r\geq 0$  and $t\geq 0$ are any suitable integers.   
\end{itemize}
\end{thm}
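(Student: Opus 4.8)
The plan is to reduce \eqref{eqn} to the prime-exponent equation \eqref{eqp} and then invoke Theorem \ref{thm}. Since $n>2$ is not a power of $2$, it has an odd prime divisor; let $p$ be the largest prime divisor of $n$, so $p\geq 3$, and write $n=pn_1$ with $n_1\geq 1$. As $p\mid n$ and $\gcd(n,h(-\ell q))=1$, we get $p\nmid h(-\ell q)$; also $\ell q$ is square-free (a product of two distinct primes) and $\ell q\not\equiv 3\pmod 4$ by hypothesis. Putting $Y:=y^{n_1}$, equation \eqref{eqn} becomes $\ell x^2+q^{2m+1}=2Y^p$, i.e.\ \eqref{eqp} with $(c,d)=(\ell,q)$, so Theorem \ref{thm} lists all possibilities for $(x,Y)$; it then remains to pull these back along $Y=y^{n_1}$, i.e.\ to decide in each case when the value forced on $Y$ is a perfect $n_1$-th power.

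I would split the argument according to $p$; since $p$ is the largest prime divisor of $n$, the cases $p>7$, $p=7$, $p=5$, $p=3$ are precisely the four cases of (i)--(iv). If $p>7$, Theorem \ref{thm}(I) leaves only $(x,Y)=(1,1)$, which forces $y=1$ and $\ell+q^{2m+1}=2$, impossible as $\ell\geq 3$; this is (i). If $p=7$, Theorem \ref{thm}(II) forces $(\ell,q,x,Y,m)=(7,11,1169,9,0)$ (again $(x,Y)=(1,1)$ cannot occur), so $y^{n_1}=9=3^2$; writing $n=2^a 7^b$ with $b\geq 1$ gives $n_1=2^a 7^{b-1}$, and $y^{n_1}=3^2$ is solvable only for $n_1\in\{1,2\}$, i.e.\ $n\in\{7,14\}$, which is (ii).

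For $p=5$, write $n=2^r3^s5^{t+1}$ with $t\geq 0$, so $n_1=2^r3^s5^{t}$, and apply Theorem \ref{thm}(III) (using $\ell q\neq 1$). There $Y$ is forced to be a Fibonacci or a Lucas number, with $c=\ell$, $d=q$ and $x$ given in terms of the auxiliary parameters $u,v$; I would then impose that $\ell$ and $q$ be prime, which pins down $u$ and $v$, discard the family that becomes impossible, and use the identity $F_{k-2\varepsilon}+L_{k+\varepsilon}=4F_k$ together with the negative Pell relation it forces between $F_{k-2\varepsilon}$ and $L_{k+\varepsilon}$ to rewrite the surviving family in the closed form of (iii), with $y=Y^{1/n_1}$ and $n=5n_1$. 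The case $p=3$ is similar but easier: writing $n=2^r3^{t+1}$, so $n_1=2^r3^{t}$, Theorem \ref{thm}(IV) gives $q^{2m+1}=d\cdot v^2=3u^2\ell-2$, hence $\ell=(q^{2m+1}+2)/(3u^2)$, and substituting back yields $Y=2u^2\ell-1=(2q^{2m+1}+1)/3$ and $x=u(4u^2\ell-3)=u(4q^{2m+1}-1)/3$; pulling these back through $Y=y^{n_1}$ gives exactly the tuple in (iv).

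The reduction and the cases $p>7,\,7,\,3$ are routine once Theorem \ref{thm} is available. The main obstacle will be the case $p=5$: one must determine exactly which of the two families in Theorem \ref{thm}(III) can survive when both $c$ and $d$ are prime, eliminate the redundant parameters, and massage the Fibonacci/Lucas expressions --- via the relevant identities and the negative Pell equation they satisfy --- into the single parametrisation recorded in (iii). A recurring technical point is verifying that the constraint ``$Y$ is a perfect $n_1$-th power'' is consistent with the shape of the parametrised solutions, which is what fixes the admissible exponents $r,s,t$ in the statements.
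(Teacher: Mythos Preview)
Your overall strategy matches the paper's: reduce to a prime exponent via $Y=y^{n/p}$ and apply Theorem~\ref{thm}. The cases $p>7$, $p=7$ and $p=3$ go through exactly as you describe and coincide with the paper's treatment.

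The $p=5$ case is where your plan diverges from the paper and is underspecified. The paper does \emph{not} quote Theorem~\ref{thm}(III) here; instead it reruns the Lehmer-sequence argument directly with $(c,d)=(\ell,q)$, exploiting the primality of $q$ from the start: from $\mathcal{L}_5(\alpha,\bar\alpha)=\pm q^m/v$ one gets $v=q^r$ with $0\le r\le m$, and the subcase $r\le m-1$ is disposed of by an explicit elementary computation (it forces $q=5$ and then a contradiction). For $r=m$ one has $|\mathcal{L}_5|=1$, and the Bilu--Hanrot--Voutier tables yield the two families \eqref{5x} and \eqref{5xx}. The Lucas family \eqref{5xx} is eliminated via Cohn's theorem (Theorem~\ref{thmCO}) on when $F_k=2\cdot(\text{square})$, which forces a single candidate that is then checked to fail. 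The surviving Fibonacci family \eqref{5x} is massaged, by comparing imaginary parts of \eqref{eqn2}, into the negative Pell equation \eqref{5xxx} and the closed formulas for $x$ and $y$.

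Your route through Theorem~\ref{thm}(III) is in principle workable, but to carry it out you still need precisely these two ingredients: Cohn's theorem to make good on ``discard the family that becomes impossible'', and the imaginary-part comparison to produce the negative Pell constraint referenced in (iii). So the two approaches ultimately converge; the paper's is more self-contained because Theorem~\ref{thm}(III) leaves the dependence on $m$ (equivalently the constraint on $v$) only implicit in the phrase ``suitable $u,v$'', and redoing the argument with $q$ prime makes that constraint explicit and tractable.
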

We now consider $\ell \in\{3,7,11,19\}$ and $q\in \{43,79,87,119,239\}$. Then $\ell q\not\equiv 3 \pmod 4$ and we see that $2$ and $3$ are the only prime factors of $h(-\ell q)$. Thus as a consequence of Theorem \ref{thm2}, one gets the following straightforward corollary. 
\begin{cor}\label{cor1}
Fix $\ell \in\{3,7,11,19\}$ and $q\in\{43,79,87,119,239\}$. If $n$ is a positive integer such that $\gcd(n, 30)=1$, then \eqref{eqn} has no positive integer solution. 
\end{cor}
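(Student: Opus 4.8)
The plan is to deduce this statement directly from Theorem~\ref{thm2}: the only work is to check that its hypotheses hold for the listed pairs $(\ell,q)$ and that none of its solution families survives the constraint $\gcd(n,30)=1$. First I would assemble the finite table attached to the twenty pairs with $\ell\in\{3,7,11,19\}$ and $q\in\{43,79,87,119,239\}$. For each pair one verifies by inspection that $\ell q\not\equiv 3\pmod 4$, and one computes $h(-\ell q)$ (from standard class-number tables, or directly by enumerating reduced binary quadratic forms of the relevant discriminant) and observes that $2$ and $3$ are the only primes dividing $h(-\ell q)$. Since $\gcd(n,30)=1$ forces $2\nmid n$ and $3\nmid n$, this yields $\gcd(n,h(-\ell q))=1$; the same hypothesis makes $n$ odd, hence not a power of $2$, and, discarding the trivial case $n=1$ (or using the standing convention $n\ge 3$), we have $n>2$. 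Thus all hypotheses of Theorem~\ref{thm2} are met.

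It then remains to run through the four cases of Theorem~\ref{thm2}. Let $P(n)$ denote the largest prime factor of $n$. Since $\gcd(n,30)=1$ and $n>1$, every prime factor of $n$ is at least $7$, so $P(n)\ge 7$. If $P(n)>7$, part~(i) gives that \eqref{eqn} has no positive integer solution. If $P(n)=7$, part~(ii) forces every solution into the two explicit tuples, both of which have $q=11$; since $11\notin\{43,79,87,119,239\}$ — and one of them even has the exponent $n=14$, already excluded by $\gcd(n,30)=1$ — no solution arises for the prescribed $q$. Cases~(iii) and~(iv) would require $P(n)=5$ and $P(n)=3$ respectively, both impossible here since $5\nmid n$ and $3\nmid n$. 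Hence \eqref{eqn} has no positive integer solution whenever $\gcd(n,30)=1$, which is the assertion.

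I expect no genuine obstacle: the corollary is a pure specialization of Theorem~\ref{thm2}. The points that merely demand care are the finite check that $h(-\ell q)$ is $\{2,3\}$-smooth for all twenty pairs — along with confirming that each $\ell q$ is square-free (and each $q$ an odd prime), so that \eqref{eqn} genuinely lies within the scope of Theorem~\ref{thm2} — the harmless disposal of the degenerate exponent $n=1$, and the observation that the sporadic solutions appearing in part~(ii) involve $q=11$ and hence fall outside the chosen set.
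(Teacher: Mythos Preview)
Your approach is exactly the paper's: the corollary is stated as an immediate consequence of Theorem~\ref{thm2}, the paper's entire ``proof'' being the one-sentence remark preceding it that $\ell q\not\equiv 3\pmod 4$ and that $2$ and $3$ are the only prime factors of $h(-\ell q)$, whence Theorem~\ref{thm2} applies.

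One point worth flagging: your parenthetical caution about confirming that each $q$ is an odd prime and each $\ell q$ is square-free is well placed, because in fact $87=3\cdot 29$ and $119=7\cdot 17$ are \emph{not} prime (and $\ell q$ is not even square-free when $\ell=3,\,q=87$ or $\ell=7,\,q=119$). So for those $q$ the appeal to Theorem~\ref{thm2} as stated does not go through; this is a defect in the paper's corollary rather than in your plan, but your verification step would have exposed it.
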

If we fix $\ell, q \in\{3,7,11,23\}$,  then $\ell q\not\equiv 3 \pmod 4$ and $2$ is the only prime factor of $h(-\ell d)$. Therefore by Theorem \ref{thm2}, we get the following:
\begin{cor}\label{cor2}
Let $\ell, q \in\{3,7,11,23\}$ such that $\ell \ne q$. Then for an odd integer $n>1$, the positive integer solutions of  \eqref{eqn} are given by $$(\ell, q, x,y,m,n)\in\left\{ (7,11,1169,9,0,7), \left(3, 7, 9, 5, 0, 3\right)\right\}.$$ 
\end{cor}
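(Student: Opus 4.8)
The plan is to read the corollary off Theorem \ref{thm2}. Its hypotheses hold for every admissible pair here: since each of $3,7,11,23$ is $\equiv 3\pmod 4$, the product of any two of them is $\equiv 1\pmod 4$, so $\ell q\not\equiv 3\pmod 4$; and, as recorded just above, $h(-\ell q)$ is a power of $2$ in all six cases, so an odd $n$ automatically satisfies $\gcd(n,h(-\ell q))=1$ and (being $>1$ and odd) is not a power of $2$. Hence Theorem \ref{thm2} applies, and the task reduces to deciding, case by case, which of its four alternatives can hold with \emph{both} $\ell$ and $q$ in $\{3,7,11,23\}$.

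I would split according to the largest prime divisor $P$ of $n$. If $P>7$, alternative (i) forbids any solution. If $P=7$, alternative (ii) offers only $(7,11,1169,9,0,7)$ and $(7,11,1169,3,0,14)$; the second is ruled out because $14$ is not odd, and the first has $\ell=7,\ q=11$, both in the set, so it survives. If $P=3$, I would feed $q\in\{3,7,11,23\}$ into the parametrization of alternative (iv): the relation $\ell=(q^{2m+1}+2)/(3u^{2})$ forces $3\mid q^{2m+1}+2$, which fails for $q=3$ and for $q\in\{11,23\}$ (there $q^{2m+1}+2\equiv 2$, resp.\ $1\pmod 3$), leaving $q=7$; then $(7^{2m+1}+2)/(3u^{2})$ lies in $\{3,7,11,23\}$ only when $m=0$ and $u=1$, giving $\ell=3$ and $x=9$; finally $n=2^{r}3^{t+1}$ must be odd, so $r=0$, and $y=5^{1/3^{t}}$ being an integer forces $t=0$, i.e. $n=3$, $y=5$. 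This reproduces exactly $(3,7,9,5,0,3)$.

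The remaining alternative, $P=5$ (that is, (iii)), is the one needing real care: I must show it is vacuous for these $\ell,q$. Here I would exploit the Fibonacci/Lucas shape of the solutions behind (iii), inherited from Theorem \ref{thm}(III), in which $q$ necessarily has the form $L_{k+\varepsilon}/(2v^{2})$ or $5F_{t+\varepsilon}/(2v^{2})$ for some odd $v$. If $q$ is a prime of this form, then $2v^{2}q$ is an even Lucas number, or $2v^{2}q/5$ an even Fibonacci number; scanning the even-indexed values $L_{3},L_{6},L_{9},\dots$ and $F_{3},F_{6},F_{9},\dots$ (and the accompanying negative-Pell constraint only thins the admissible indices further) shows the only primes $q$ that arise are $2$ and $5$, neither of which lies in $\{3,7,11,23\}$. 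Thus (iii) produces nothing, and collecting the outcomes of the four cases yields precisely the two tuples in the statement. I expect this last elimination in the $P=5$ case to be the only genuinely fiddly point — everything else is a one-line congruence or a glance at a table.
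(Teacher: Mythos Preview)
Your approach—check the hypotheses of Theorem \ref{thm2} and then run through its four alternatives—is exactly the paper's (which simply asserts ``by Theorem \ref{thm2}'' and gives no further detail). Your handling of the cases $P>7$, $P=7$, and $P=3$ is correct and considerably more explicit than anything the paper writes down.

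The $P=5$ elimination, however, is not a proof as stated. Via \eqref{5x} the constraint is $L_{k+\varepsilon}=2q^{2m+1}$ together with $F_{k-2\varepsilon}=2u^{2}\ell$, and ``scanning the even-indexed values $L_{3},L_{6},L_{9},\dots$'' is an unbounded search: nothing you say rules out, e.g., $L_{j}=2\cdot 7^{2m+1}$ or $2\cdot 23^{2m+1}$ for some large $j$ and $m$. Your assertion that ``the only primes $q$ that arise are $2$ and $5$'' is therefore neither obvious nor justified. To close the gap one must either invoke a genuine finiteness result on Lucas numbers of the shape $2q^{k}$, or exploit the Pell constraint \eqref{5xxx} more seriously—its solutions force $F_{k-2\varepsilon}/2\in\{1,17,305,5473,\ldots\}$, and one would then need to argue that none of these equals $u^{2}\ell$ with odd $u$ and $\ell\in\{3,7,11,23\}$; but that is again an infinite family and requires an actual argument, not a scan. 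The paper itself supplies nothing here either, so you are not overlooking an idea from the text; but the step remains open in your write-up.
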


In the next result, we fix $c=1$ and completely solve \eqref{eqg} under the restriction $\gcd(n, h(-d))=1$. This result gives a particular case of Theorem \ref{thm} when $n$ is a prime.

\begin{thm}\label{thm3}
Let $d\equiv 1\pmod 4$ be a square-free integer with $d>1$. Assume that $n>2$ is an integer which is not a power of $2$ such that  $\gcd(n, h(-d))=1$. Then the Diophantine equation
\begin{equation}\label{eqd}
x^2+d^{2m+1}=2y^n,
\end{equation} 
has no positive integer solution when $n$ has a prime divisor other than $2$ and $3$. When $2$ and $3$ are the only prime divisors of $n$, all positive integer solutions are given by $$(d,x,y, m, n)=\left((3u^2-2)/v^2, 4u^3-3u, (2u^2-1)^{1/2^r3^t},  m, 2^r3^{t+1}\right),$$ where $v=d^m$, $u\geq 1$ is an odd integer coprime to $v$, and $r$ and $t$ are suitable non-negative integers.
\end{thm}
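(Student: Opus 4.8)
The plan is to derive Theorem~\ref{thm3} as a specialization of Theorem~\ref{thm} combined with a descent on the exponent $n$. First I would reduce to the prime case: write $n = 2^r 3^t p_1^{a_1}\cdots p_k^{a_k}$ with the $p_i$ odd primes, and observe that a solution of \eqref{eqd} with exponent $n$ is also a solution with exponent $p$ for every prime $p \mid n$, after replacing $y$ by $y^{n/p}$. Since $\gcd(n, h(-d))=1$, each such prime $p$ satisfies $p \nmid h(-d)$, so Theorem~\ref{thm} (applied with $c=1$, using that $d \equiv 1 \pmod 4$ forces $cd = d \not\equiv 3 \pmod 4$) governs the situation. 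Part~(I) of Theorem~\ref{thm} then kills any prime divisor $p > 7$ of $n$ (the only solution being the trivial $(x,y)=(1,1)$, which is excluded since $d>1$ forces $y>1$ in \eqref{eqd}), and part~(II) shows that $p=7$ would force $(c,d)=(7,11)$, contradicting $c=1$. Hence $n$ has no prime divisor other than $2$ and $3$, which is the first assertion.

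Next, for the case when $2$ and $3$ are the only primes dividing $n$, I would peel off exactly the cases $p=5$ and $p=3$ of Theorem~\ref{thm}: since $5 \nmid n$ here, only $p=3$ is relevant, and I apply part~(IV) with $c=1$. That part gives $d = (3u^2 c - 2)/v^2 = (3u^2-2)/v^2$ with $v = d^m$ and $u$ an odd positive integer, together with the base-$p$ values $x = u(4u^2-3) = 4u^3 - 3u$ and the ``$y$ at exponent $3$'' equal to $2u^2 c - 1 = 2u^2 - 1$. Writing $n = 2^r 3^{t+1}$ (the exponent of $3$ is at least one because $cd = d \ne 1$ rules out the trivial solution, so $3 \mid n$ genuinely), the original $y$ in \eqref{eqd} satisfies $y^{n/3} = 2u^2-1$, i.e. $y = (2u^2-1)^{1/2^r 3^t}$, which is exactly the claimed parametrization. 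The coprimality condition $\gcd(u,v)=1$ comes from $\gcd(x,y)=1$ in \eqref{eqlrn}/\eqref{eqd}, or equivalently from $\gcd(cx,y)=1$ with $c=1$; I would spell out that $\gcd(u,v)=1$ is forced because any common prime would divide both $d$ and $x$.

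The main obstacle, and the place requiring genuine care rather than bookkeeping, is the interface between the exponent $n$ and the prime exponents: one must verify that no solution is lost or spuriously gained when passing from $y^n$ to $(y^{n/p})^p$, in particular that the side conditions in Theorem~\ref{thm} — squarefreeness of $cd = d$, the congruence $cd \not\equiv 3 \pmod 4$, and $p \nmid h(-cd) = h(-d)$ — are all inherited, and that the ``$u$ odd'' and coprimality constraints propagate correctly through the substitution. One should also double-check the degenerate possibilities: that $m=0$ is allowed (then $v = d^0 = 1$ and $d = 3u^2 - 2$), and that the excluded trivial solution $(x,y)=(1,1)$ genuinely cannot occur when $d > 1$, since \eqref{eqd} at $(x,y)=(1,1)$ gives $1 + d^{2m+1} = 2$, forcing $d=1$. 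Finally, I would confirm that every tuple produced by the formula does satisfy \eqref{eqd}: substituting $x = 4u^3-3u$, $y^{2^r3^t} = 2u^2-1$ and $d = (3u^2-2)/v^2$ with $v=d^m$ into $x^2 + d^{2m+1} = 2y^n$ reduces, after clearing $v^2$, to the Chebyshev-type identity $(4u^3-3u)^2 + (3u^2-2)^3 = 2(2u^2-1)^3$, which is the $p=3$ case already established in Theorem~\ref{thm}(IV); so this direction is immediate once that identity is in hand.
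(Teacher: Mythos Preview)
Your reduction to the prime case and the handling of $p>7$ and $p=7$ via parts (I) and (II) of Theorem~\ref{thm} are fine, as is the $p=3$ bookkeeping in the second half. But there is a genuine gap: you conclude ``Hence $n$ has no prime divisor other than $2$ and $3$'' immediately after disposing of $p=7$ and $p>7$, without ever treating $p=5$. Part~(III) of Theorem~\ref{thm} does \emph{not} say there are no solutions for $p=5$ when $c=1$; it gives a parametrization in terms of Fibonacci and Lucas numbers, and you must rule out that parametrization yielding a valid solution with $c=1$ and $d>1$ square-free.

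This is precisely where the paper does real work: setting $c=1$ in part~(III) forces $F_{k-2\varepsilon}=2u^2$ or $L_{t-2\varepsilon}=2u^2$ with $u$ odd, and one invokes Cohn's theorem (Theorem~\ref{thmCO}) to pin down the finitely many admissible $(k,\varepsilon,u)$ or $(t,\varepsilon,u)$. Each surviving case is then eliminated by computing the corresponding $x$ from the formulas in (III) and checking that it comes out even, contradicting the parity forced by $d\equiv 1\pmod 4$. Without this step your argument does not exclude $5\mid n$, so the first assertion of Theorem~\ref{thm3} is unproved. Once you insert the $p=5$ analysis, the rest of your outline matches the paper's approach.
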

\subsection*{Remarks} We make the following remarks about Theorem \ref{thm3}.
\begin{itemize}
\item[(i)] For $d\equiv2 \pmod 4$, \eqref{eqd} has no solution.
\item[(ii)] For $m=0$, \eqref{eqd} reduces to $x^2+d=2y^n$, which was deeply investigated by Abu Muriefah et al. \cite{MLST09} for $1\leq d<100$, under the condition $\gcd(x,y)=1$.  However, the following solutions remained unnoticed to them 
$$(d,x,y,n)\in\{(73, 485,49,3), (73, 485,7,6)\},$$ 
which are obtained by our Theorem \ref{thm3}.
\item[(iii)] Since in \cite{MLST09} the authors were allowed $d$ to be square, they failed to determine $(d, x,y,n)=(25, 99,17,3)$. Although it is out of the assumptions of Theorem \ref{thm3}, we obtain this solution from our theorem.
\item[(iv)] For $(m,n)=(0,3)$, the positive integer solutions of \eqref{eqd} are given by $$(d,x,y)=(3u^2-2,4u^3-3u, 2u^2-1),$$
where $u\geq 1$ is an odd integer. It is noted that each fixed $d$ gives at most one positive integer solution $(x,y)$.    
\item[(v)] From Theorem \ref{thm3}, we see that $x^2+d=2y^6$ has an infinite family of positive integer solutions, corresponding to the solutions of the negative Pell equation $y^2-2u^2=-1$. More precisely, these solutions are given by 
$$(d,x,y)=(3u_t^2-2,  4u_t^3-3u_t, y_t),$$
where $(u_t, y_t)=(2y_{t-1}+3u_{t-1},  3y_{t-1}+4u_{t-1})$ with $t\geq 0$ an integer and $(u_0,y_0)=(1,1)$. 
\end{itemize}

\section{Lehmer sequences and some lemmas}
We need some definitions and notations to state some crucial results on the existence of primitive divisors of Lehmer numbers. Suppose that $\alpha$ and $\beta$ are algebraic integers. Then $(\alpha, \beta)$ is a {\it Lehmer pair} if $(\alpha + \beta)^2$ and $\alpha\beta$ are two non-zero coprime rational integers, and $\alpha/\beta$ is not a root of unity. For a positive integer $\ell$ and a Lehmer pair $(\alpha, \beta)$, the corresponding sequence of {\it Lehmer numbers} is defined by 
$$\mathcal{L}_\ell(\alpha, \beta)=\begin{cases}
\dfrac{\alpha^\ell-\beta^\ell}{\alpha-\beta} & \text{ if } \ell \text{ is odd}, \vspace{1mm}\\
\dfrac{\alpha^\ell-\beta^\ell}{\alpha^2-\beta^2} & \text{ if } \ell \text{ is even}.
\end{cases}$$
Note that all Lehmer numbers are non-zero rational integers. 
Recall that two Lehmer pairs $(\alpha_1, \beta_1)$ and $(\alpha_2, \beta_2)$ are equivalent if $\alpha_1/\alpha_2=\beta_1/\beta_2\in \{\pm 1, \pm\sqrt{-1} \}$. For such pairs, $\mathcal{L}_\ell(\alpha_1, \beta_1) = \pm \mathcal{L}_\ell(\alpha_2, \beta_2)$ for any positive integer $\ell$. A prime number $p$ is a {\it primitive
divisor} of $\mathcal{L}_\ell(\alpha, \beta)$ if $p\mid\mathcal{L}_\ell(\alpha, \beta)$ and $p\nmid(\alpha^2-\beta^2)^2
\mathcal{L}_1(\alpha, \beta) \mathcal{L}_2(\alpha, \beta) \cdots \mathcal{L}_{\ell-1}(\alpha, \beta)$. 

We now define $a:=(\alpha+\beta)^2$ and $ b:=a-4\alpha\beta$. Then $\alpha=(\sqrt{a}\pm\sqrt{b})/2$ and $\beta=(\sqrt{a}\mp\sqrt{b})/2$. This pair $(a, b)$ is known as the parameter of the Lehmer pair $(\alpha, \beta)$. We combine a classical result of  Voutier \cite[Theorem 1]{VO95} and a result of Bilu et al. \cite[Theorem 1.3]{BH01}; precisely Tables 2 and 4 in \cite{BH01}, to get the following: 
\begin{thma}\label{thmVO}
Let $\ell$ be a prime number such that $3\leq\ell\leq 30$. If the Lehmer numbers $\mathcal{L}_\ell(\alpha, \beta)$ have no primitive divisor, then up to equivalence, the parameters $(a, b)$ of the corresponding Lehmer pair $(\alpha, \beta)$ are given by the following:
\begin{itemize}
\item[(i)] for $\ell=3, (a, b)=\begin{cases} (1+t, 1-3t) \text{ with } t\ne 1,\\ 
(3^k+t, 3^k-3t) \text{ with } t\not\equiv 0\pmod 3, (k,t)\ne(1,1);\end{cases}$
\item[(ii)] for $\ell=5, (a, b)=\begin{cases} (F_{k-2\varepsilon}, F_{k-2\varepsilon}-4F_k)\text{ with } k\geq 3, \\
 (L_{k-2\varepsilon}, L_{k-2\varepsilon}-4L_k)\text{ with } k\ne 1;\end{cases}$
\item[(iii)] for $\ell=7, (a, b)=(1,-7), (1, -19), (3, -5), (5, -7), (13, -3), (14, -22)$;  
\item[(iv)] for $\ell=13, (a, b)=(1,-7)$;
\end{itemize}
where $t$ is any non-zero integer, $k$ is non-negative integer, $\varepsilon=\pm 1$, and $F_{k}$ (resp. $L_k$) denotes the $k$-th Fibonacci (resp. Lucas) number.  
\end{thma}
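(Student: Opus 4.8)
This theorem merely repackages two deep results from the literature, so the task is to assemble them correctly rather than to prove anything new. Call a Lehmer pair $(\alpha,\beta)$ \emph{$\ell$-defective} if the Lehmer number $\mathcal{L}_\ell(\alpha,\beta)$ has no primitive divisor. The plan is to invoke Voutier's theorem \cite[Theorem 1]{VO95}, which by a combination of analytic bounds and extensive computation treats the range $\ell\le 30$ apart from finitely many remaining cases, together with the theorem of Bilu, Hanrot and Voutier \cite[Theorem 1.3]{BH01}, which shows that $\mathcal{L}_\ell(\alpha,\beta)$ always has a primitive divisor when $\ell>30$ and settles the outstanding small cases; the resulting complete list of $\ell$-defective Lucas pairs, resp. Lehmer pairs, with $\ell\le 30$ is exactly Table 2, resp. Table 4, of \cite{BH01}. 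Since every Lucas pair is equivalent to a Lehmer pair, the full catalogue of $\ell$-defective Lehmer pairs with $\ell\le 30$ is the union of these two tables.

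The argument then has three steps. First, specialize to primes $\ell$ with $3\le\ell\le 30$, that is $\ell\in\{3,5,7,11,13,17,19,23,29\}$. Second, inspect Tables 2 and 4 of \cite{BH01}: none of the primes $11,17,19,23,29$ appears in either table, so for these the Lehmer numbers always have a primitive divisor and there is nothing to record; only $\ell\in\{3,5,7,13\}$ contribute exceptional pairs. Third, translate, for each of these four primes, the tabulated pairs into the parametrization used here, namely $(a,b)=((\alpha+\beta)^2,\,(\alpha+\beta)^2-4\alpha\beta)$ considered up to the equivalence $\alpha_1/\alpha_2=\beta_1/\beta_2\in\{\pm1,\pm\sqrt{-1}\}$. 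For $\ell=3$ one obtains the two one-parameter families $(1+t,1-3t)$ and $(3^k+t,3^k-3t)$ after deleting the degenerate parameters $t=1$ and $(k,t)=(1,1)$, for which $(\alpha,\beta)$ is not a genuine Lehmer pair; for $\ell=5$ one rewrites $b$ via the Fibonacci/Lucas recurrences to reach the two families displayed in (ii); for $\ell=7$ one gets the six sporadic pairs $(1,-7),(1,-19),(3,-5),(5,-7),(13,-3),(14,-22)$, drawn from both tables; and for $\ell=13$ the single pair $(1,-7)$.

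The one point that needs care is the bookkeeping across conventions. The pairs in \cite{VO95} and \cite{BH01} are presented only up to equivalence and in slightly different normalizations, so one must check that passing to $(a,b)$ with $a=(\alpha+\beta)^2$ and $b=a-4\alpha\beta$ reproduces every table entry once and only once, with no omissions, and that the parameters excluded above are precisely those for which $(\alpha,\beta)$ violates the definition of a Lehmer pair, i.e. $\alpha\beta=0$, $\gcd((\alpha+\beta)^2,\alpha\beta)\ne1$, or $\alpha/\beta$ is a root of unity. Because $3\le\ell\le 30$ is a finite range and all the tables are explicit, this verification is routine rather than new mathematics, and carrying it out establishes the theorem.
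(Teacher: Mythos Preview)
Your proposal is correct and matches the paper's treatment: the paper does not give a proof of this statement at all, but simply records it as the combination of \cite[Theorem 1]{VO95} with \cite[Theorem 1.3]{BH01} (specifically Tables 2 and 4 there), exactly as you describe. Your added remarks on the bookkeeping across conventions and the exclusion of degenerate parameters are accurate elaborations of what the paper leaves implicit.
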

We also need the following result of Bilu et al. \cite[Theorem 1.4]{BH01} in the proofs. 
\begin{thma}\label{thmBHV}
For every integer $\ell>30$, the Lehmer numbers $\mathcal{L}_\ell (\alpha, \beta) $ have primitive divisors.
\end{thma}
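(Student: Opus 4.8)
The statement is the Bilu--Hanrot--Voutier primitive divisor theorem, so the plan is the classical three-step route: (a) show that \emph{defectiveness} (the absence of a primitive divisor) forces the $\ell$-th cyclotomic value at the Lehmer pair to be very small; (b) bound that same value from below by a quantity that grows with $\ell$; (c) reconcile the two, which is possible only for small $\ell$, and dispose of the residual finite list of parameters by elementary means.

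\textbf{Step (a): defectiveness forces a tiny cyclotomic value.} Let $\Phi_\ell$ be the $\ell$-th cyclotomic polynomial and write $\Phi_\ell(\alpha,\beta)$ for its homogenization evaluated at a Lehmer pair. Then, up to signs and the obvious modification when $\ell$ is even, $\mathcal{L}_\ell(\alpha,\beta)=\prod_{d\mid\ell,\ d>1}\Phi_d(\alpha,\beta)$. Analysing the rank of apparition of a prime in a Lehmer sequence shows that a prime dividing $\Phi_\ell(\alpha,\beta)$ that is \emph{not} a primitive divisor of $\mathcal{L}_\ell(\alpha,\beta)$ must equal the largest prime factor $P(\ell)$ of $\ell$ and can occur only to the first power; hence, if $\mathcal{L}_\ell(\alpha,\beta)$ has no primitive divisor, then
\[
1\le |\Phi_\ell(\alpha,\beta)|\le P(\ell)\le \ell .
\]
One also reduces even $\ell$ to odd $\ell$ (and Lehmer pairs to the associated Lucas pairs), so that it suffices to bound $|\Phi_\ell(\alpha,\beta)|$ from below.

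\textbf{Step (b): the analytic lower bound.} Factor $\Phi_\ell(\alpha,\beta)=\prod_{\gcd(k,\ell)=1}(\alpha-\zeta_\ell^{k}\beta)$. If $\alpha/\beta$ is real, then $|\alpha/\beta|>1$ and is bounded away from $1$ (since $\alpha\beta$ and $(\alpha+\beta)^2$ are coprime nonzero integers), so each factor has modulus $\gg|\alpha/\beta|^{1/2}$ and $|\Phi_\ell(\alpha,\beta)|\ge c_1^{\,\varphi(\ell)}$ with $c_1>1$ absolute --- geometric growth, no transcendence needed. If $\alpha,\beta$ are complex conjugates with $|\alpha|=|\beta|=\rho\ge 1$, then $\alpha/\beta$ lies on the unit circle but is not a root of unity; every factor has modulus $\rho\,|1-\zeta_\ell^{k}\beta/\alpha|$, and the single factor that may be close to $\rho$ has modulus governed by how closely $\ell\arg(\alpha/\beta)$ approximates an integer multiple of $2\pi$, equivalently by a linear form $\ell\log(\alpha/\beta)+2k\pi i$ ($k\in\mathbb{Z}$) in two logarithms. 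Here I would invoke the sharp explicit lower bounds of Laurent--Mignotte--Nesterenko for two-logarithm forms, which give a bound of shape $\exp(-C(\log\ell)(\log A))$, with $\log A=O(\log(|a|+|b|))$ controlling the height of $\alpha/\beta$. Multiplying that one small factor against the remaining $\varphi(\ell)-O(1)$ factors (each $\gg\rho$, by a Stewart-type argument), one obtains $|\Phi_\ell(\alpha,\beta)|\ge\exp(c_2\,\ell/\log\ell)$ for $\ell$ large, uniformly in $a$ and $b$.

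\textbf{Step (c) and the main obstacle.} Comparing the bounds, $\exp(c_2\,\ell/\log\ell)\le|\Phi_\ell(\alpha,\beta)|\le\ell$ is impossible once $\ell$ exceeds an explicit threshold, and the whole difficulty is to make that threshold exactly $30$. This needs (i) the sharpest available two-logarithm estimate --- a generic Baker-type bound would leave an astronomically large interval of $\ell$ still to be treated --- and (ii) a finite but delicate analysis of the small-height situations ($|a|$ or $|b|$ small, or $\ell$ only slightly above $30$), where the crude inequalities are too weak: there one instead solves the Thue-type equations $|\Phi_\ell(\alpha,\beta)|\in\{1,2,\dots,P(\ell)\}$ in the integer unknowns $a,b$ directly, and checks that none yields a genuinely $\ell$-defective pair with $\ell>30$ (the exceptions that do survive for $\ell\le 30$ being exactly those tabulated in Theorem \ref{thmVO}). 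I expect point (i) --- squeezing the threshold down to $30$ from the transcendence input --- to be the real bottleneck; the residual Thue computations in (ii) are routine but voluminous.
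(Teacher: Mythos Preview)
Your sketch is a faithful high-level outline of the Bilu--Hanrot--Voutier argument, but note that the paper does not prove this statement at all: Theorem~\ref{thmBHV} is simply quoted verbatim as \cite[Theorem~1.4]{BH01} and used as a black box. There is therefore no ``paper's own proof'' to compare against; the author treats the primitive-divisor theorem as an external input, exactly like Theorem~\ref{thmVO}.

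If the intent was to supply what the paper omits, then your three-step plan (tiny cyclotomic value from defectiveness, lower bound via Laurent--Mignotte--Nesterenko two-logarithm estimates, and a residual Thue/computational sweep) is the correct architecture and matches what Bilu, Hanrot and Voutier actually do. Your identification of the bottleneck --- pushing the transcendence threshold all the way down to $30$ --- is also accurate; in \cite{BH01} this requires Mignotte's appendix with a bespoke sharpening of the linear-forms bound, together with substantial machine computation for the intermediate range. What your sketch underplays is the volume of case analysis in Step~(c): the ``routine but voluminous'' Thue computations and the treatment of small parameters occupy the bulk of \cite{BH01}, and the even-$\ell$ reduction and the separation of the Lucas/Lehmer cases need more care than a single sentence suggests. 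As a self-contained proof this would need those details filled in, but for the purposes of the present paper none of that is required --- a citation suffices.
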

Let $F_k$ (resp. $L_k$) denote the $k$-th term in the Fibonacci (resp. Lucas) sequence defined by $F_0=0,   F_1= 1$,
and $F_{k+2}=F_k+F_{k+1}$ (resp. $L_0=2,  L_1=1$, and $L_{k+2}=L_k+L_{k+1}$), where $k\geq 0$ is an integer. Then 
$L_k=F_{k-1}+F_{k+1}$ and $5F_k=L_{k-1}+L_{k+1}$ for all $k\geq 1$. Applying these relations, we can derive the following:

\begin{lem}\label{flp}
For an integer $k\geq 0$, let $F_k$ (resp. $L_k$) denote the $k$-th Fibonacci (resp. Lucas) number. Then for $\varepsilon=\pm 1$,
\begin{itemize}
\item[(i)] $4F_k-F_{k-2\varepsilon}=L_{k+\varepsilon}$;
\item[(ii)] $4L_k-L_{k-2\varepsilon}=5F_{k+\varepsilon}$.
\end{itemize}
\end{lem}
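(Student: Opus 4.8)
The plan is to deduce both identities from the two relations recalled just above the lemma, namely $L_k = F_{k-1}+F_{k+1}$ and $5F_k = L_{k-1}+L_{k+1}$, together with the defining recurrences, treating $\varepsilon = 1$ and $\varepsilon = -1$ in parallel. For the few small indices where $k - 2\varepsilon$ or $k\pm 1$ becomes negative, one either checks the identities by hand for $k=0,1$ or simply works with the standard extension of the Fibonacci and Lucas sequences to negative indices, under which every relation used below still holds.

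For (i), I would first record the auxiliary formula $F_{k-2\varepsilon} = 2F_k - \varepsilon F_{k+\varepsilon}$. When $\varepsilon = 1$ this is $F_{k-2} = 2F_k - F_{k+1}$, which follows from $F_{k-2} = F_k - F_{k-1}$ and $F_{k-1} = F_{k+1} - F_k$; when $\varepsilon = -1$ it is $F_{k+2} = 2F_k + F_{k-1}$, which follows from $F_{k+2} = F_{k+1} + F_k$ and $F_{k+1} = F_k + F_{k-1}$. Substituting this into the left-hand side of (i) gives
\[
4F_k - F_{k-2\varepsilon} = 4F_k - (2F_k - \varepsilon F_{k+\varepsilon}) = 2F_k + \varepsilon F_{k+\varepsilon}.
\]
So it suffices to show $2F_k + \varepsilon F_{k+\varepsilon} = L_{k+\varepsilon}$. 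Using $L_{k+\varepsilon} = F_{k+\varepsilon-1} + F_{k+\varepsilon+1}$ together with one application of the recurrence, one gets $L_{k+1} = F_k + F_{k+2} = 2F_k + F_{k+1}$ when $\varepsilon = 1$ and $L_{k-1} = F_{k-2} + F_k = 2F_k - F_{k-1}$ when $\varepsilon = -1$; both equal $2F_k + \varepsilon F_{k+\varepsilon}$, which proves (i).

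For (ii) I would run the identical argument with the roles of $F$ and $L$ interchanged, now invoking $5F_k = L_{k-1} + L_{k+1}$ in place of $L_k = F_{k-1} + F_{k+1}$. The same recurrence yields $L_{k-2\varepsilon} = 2L_k - \varepsilon L_{k+\varepsilon}$, hence $4L_k - L_{k-2\varepsilon} = 2L_k + \varepsilon L_{k+\varepsilon}$, and the elementary computation $5F_{k+\varepsilon} = L_{k+\varepsilon-1} + L_{k+\varepsilon+1} = 2L_k + \varepsilon L_{k+\varepsilon}$ (again splitting into the cases $\varepsilon = \pm 1$) finishes the proof.

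There is essentially no genuine obstacle here: the lemma is a routine manipulation of the Fibonacci/Lucas recurrences. The only points that need a word of care are carrying the sign $\varepsilon$ uniformly through both cases and, as noted, the handful of small values of $k$ for which an index drops below zero.
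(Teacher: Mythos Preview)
Your argument is correct and follows exactly the approach the paper indicates: it derives both identities from the relations $L_k = F_{k-1}+F_{k+1}$ and $5F_k = L_{k-1}+L_{k+1}$ together with the defining recurrences. The paper gives no further details beyond this remark, so your write-up simply makes explicit what the author leaves to the reader.
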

\begin{thma}[{\cite[Theorems 2 and 4]{CO64}}]\label{thmCO} Let $F_k$ and $L_k$ be as in Lemma \ref{flp}. Then
\begin{itemize}
\item[(i)] if $L_k=2x^2$, then $k=0,6$;
\item[(ii)] if $F_k=2x^2$, then $k=0, 3,6$.
\end{itemize}
\end{thma}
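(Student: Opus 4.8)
The plan is to recover Cohn's classical determination of the twice-square terms of the Fibonacci and Lucas sequences \cite{CO64}, which (i) and (ii) merely restate; since the paper already cites \cite{CO64} this could be left as a reference, but I would reconstruct the argument, as it is elementary and rests only on the periodicity of these sequences modulo a fixed integer together with a few Pell-type factorisations.

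First I would reduce by parity: $F_k$ (and likewise $L_k$) is even exactly when $3\mid k$, so in both parts we may write $k=3n$. Then I would invoke the triplication identities
\begin{equation*}
F_{3n}=F_n\bigl(5F_n^{2}+3(-1)^{n}\bigr),\qquad L_{3n}=L_n\bigl(L_n^{2}-3(-1)^{n}\bigr).
\end{equation*}
The greatest common divisor of the two bracketed factors divides $3$, so the hypothesis $F_{3n}=2x^{2}$ (resp.\ $L_{3n}=2x^{2}$) splits into finitely many systems, each forcing $F_n$ or $L_n$ to be a square or twice a square and simultaneously $5F_n^{2}+3(-1)^{n}$ or $L_n^{2}-3(-1)^{n}$ to be a square or twice a square; the subcase in which $3$ divides both factors is handled by pulling out the power of $3$ and iterating. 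Combining the latter conditions with the fundamental identity $L_n^{2}-5F_n^{2}=4(-1)^{n}$ reduces matters to equations of the shape $u^{2}-w^{2}=c$ with $c$ a small constant, which factor over $\mathbb{Z}$ and pin $n$ down to a short explicit list.

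The one genuinely delicate ingredient is the companion classification of perfect squares in the two sequences, namely $F_n=\square\Rightarrow n\in\{0,1,2,12\}$ and $L_n=\square\Rightarrow n\in\{1,3\}$, which I would establish in Cohn's style: for $n$ outside a short set of residue classes modulo the Pisano period of a carefully chosen modulus (typically one works simultaneously modulo $8$ and modulo a prime such as $5$ or $7$, or a larger prime dividing some Fibonacci number), the residue of $F_n$ (resp.\ $L_n$) is a quadratic non-residue, which excludes squareness, and the finitely many surviving residue classes are dispatched by hand using the monotone growth of the sequences. This modulus-juggling is the main obstacle: no single modulus suffices, so one must stratify $n$ by residue classes and select a witness modulus for each stratum, which is precisely the technical core of \cite{CO64}. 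With these classifications in hand, collating the surviving cases gives $n\in\{0,2\}$, i.e.\ $k\in\{0,6\}$, for $L_k=2x^{2}$, and $n\in\{0,1,2\}$, i.e.\ $k\in\{0,3,6\}$, for $F_k=2x^{2}$; a direct check of $L_0=2$, $L_6=18$, $F_0=0$, $F_3=2$, $F_6=8$ confirms that every listed value genuinely occurs, which completes the proof.
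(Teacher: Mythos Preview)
The paper gives no proof of this statement at all: it is quoted verbatim as a theorem of Cohn, with the citation \cite[Theorems 2 and 4]{CO64} in the heading, and is then used as a black box in the proofs of Theorems~\ref{thm2} and~\ref{thm3}. So there is nothing to compare your proposal against---you are supplying strictly more than the paper does.

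As for the content of your sketch, it is a fair outline of the Cohn strategy and the triplication identities and parity reduction are correctly stated. Two remarks. First, Cohn's actual argument does not quite proceed via ``Pisano periods modulo a fixed integer''; rather, he exploits the congruences $F_{n+2m}\equiv -F_n\pmod{L_m}$ and $L_{n+2m}\equiv -L_n\pmod{L_m}$ together with the fact that $L_m\equiv 3\pmod 4$ for suitable $m$, which forces $-1$ to be a non-residue and hence pushes the squareness condition through an infinite descent on the index. Your description in terms of residue classes and witness moduli is morally equivalent but obscures this descent structure, which is really what makes the finiteness work. Second, your phrase ``the finitely many surviving residue classes are dispatched by hand using the monotone growth of the sequences'' is doing a lot of lifting: in Cohn's paper each surviving class requires its own modulus choice, and this is where the proof actually lives. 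As written, your proposal is a correct high-level plan but not yet a proof; for the purposes of the present paper, a bare citation (as the author gives) is entirely adequate.
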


The following classical lemma is a special case of \cite[Corollary 3.1]{YU05}.
\begin{lem}\label{leyu}
Let $c$ and $d$ be positive integers  such that $cd$ is square-free and $cd\not\equiv 3\pmod 4$. Assume that $n$ is a positive odd integer such that $\gcd(n, h(-cd))=1$. Then all the positive integer solutions $(X,Y,Z)$ of the equation 
\begin{equation}\label{eqyu}
cX^2+dY^2=2Z^n,~~ \gcd(cX, dY)=1,
\end{equation} 
can be expressed as 
$$\frac{X\sqrt{c}+Y\sqrt{-d}}{\sqrt{2}}=\delta\left(\frac{u\sqrt{c}\pm v\sqrt{-d}}{\sqrt{2}}\right)^n,$$
where $u$ and $v$ are positive integers satisfying $Z=\dfrac{u^2c+v^2d}{2}$ and $\gcd(uc, vd)=1$, and $\delta\in\{-1, 1\}$.
\end{lem}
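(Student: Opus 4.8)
The statement is the case ``$\lambda=2$'' of \cite[Corollary 3.1]{YU05}, so the quickest route is to quote that corollary after checking that its hypotheses hold here ($cd$ square-free, $cd\not\equiv3\pmod4$, $n$ odd, $\gcd(n,h(-cd))=1$). I nonetheless indicate the underlying argument. First I would reduce to the imaginary quadratic field $K=\mathbb{Q}(\sqrt{-cd})$. A parity analysis of $cX^2+dY^2=2Z^n$ together with $\gcd(cX,dY)=1$ shows that in any solution $c,d$ are odd --- hence $cd\equiv1\pmod4$ --- and that $X,Y,Z$ are odd with $\gcd(c,Z)=\gcd(d,Z)=\gcd(X,Y)=1$. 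Since $cd$ is square-free and $cd\not\equiv3\pmod4$, we have $\mathcal{O}_K=\mathbb{Z}[\sqrt{-cd}]$, the prime $2$ ramifies as $(2)=\mathfrak{p}^2$, and every rational prime dividing $c$ (resp.\ $d$) ramifies, so $(c)=\mathfrak{c}^2$, $(d)=\mathfrak{d}^2$ and $(\sqrt{-cd})=\mathfrak{c}\mathfrak{d}$ for square-free ideals $\mathfrak{c},\mathfrak{d}$.

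Next I would multiply by $c$ and factor in $\mathcal{O}_K$, obtaining the ideal identity $(cX+Y\sqrt{-cd})(cX-Y\sqrt{-cd})=\mathfrak{p}^2\mathfrak{c}^2(Z)^n$. Since complex conjugation interchanges the two left-hand factors and fixes the ramified primes $\mathfrak{p},\mathfrak{c}$, the coprimality relations above force $\mathfrak{p}$ and $\mathfrak{c}$ to divide $(cX+Y\sqrt{-cd})$ exactly once and the complementary ideals $\mathfrak{a},\bar{\mathfrak{a}}$ (with $\mathfrak{a}\bar{\mathfrak{a}}=(Z)^n$) to be coprime. Unique factorisation of ideals then yields $\mathfrak{a}=\mathfrak{g}^n$ with $(Z)=\mathfrak{g}\bar{\mathfrak{g}}$, so that $(cX+Y\sqrt{-cd})=\mathfrak{p}\,\mathfrak{c}\,\mathfrak{g}^n$.

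Then I would pass to the class group $\mathrm{Cl}(K)$, of order $h=h(-cd)$, in which $[\mathfrak{p}\mathfrak{c}]$ has order dividing $2$ since $(\mathfrak{p}\mathfrak{c})^2=(2c)$. From $[\mathfrak{p}\mathfrak{c}][\mathfrak{g}]^n=1$ we get $[\mathfrak{g}]^{2n}=1$, hence $[\mathfrak{g}]^{\gcd(2n,h)}=1$; as $\gcd(n,h)=1$ and $n$ is odd this gives $[\mathfrak{g}]^2=1$ and then $[\mathfrak{g}]=[\mathfrak{g}]^n=[\mathfrak{p}\mathfrak{c}]$, so $\mathfrak{g}=\mathfrak{p}\mathfrak{c}(\mu)$ for some $\mu\in K^{\times}$. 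Substituting and using $(\mathfrak{p}\mathfrak{c})^2=(2c)$ with $n$ odd, one finds $cX+Y\sqrt{-cd}=\varepsilon\,(2c)^{(n+1)/2}\mu^n$ with $\varepsilon\in\mathcal{O}_K^{\times}$. Dividing by $\sqrt{2c}$, i.e.\ passing to $\theta=\frac{X\sqrt{c}+Y\sqrt{-d}}{\sqrt{2}}=\frac{cX+Y\sqrt{-cd}}{\sqrt{2c}}$ inside $\mathbb{Q}(\sqrt{c},\sqrt{-d})$, yields $\theta=\varepsilon\,(\sqrt{2c}\,\mu)^n=\varepsilon\,\eta^n$ with $\eta:=\sqrt{2c}\,\mu$. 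A direct computation rewrites $\eta$ as $\frac{u\sqrt{c}+v\sqrt{-d}}{\sqrt{2}}$ with $u,v\in\mathbb{Q}$; since $\eta^2\in\mathcal{O}_K$ (indeed $(\eta^2)=\mathfrak{g}^2$) and $cd$ is square-free, $u,v$ lie in $\mathbb{Z}$, while $\eta\bar\eta=Z$ gives $Z=(u^2c+v^2d)/2$. Re-running the parity and coprimality bookkeeping through the explicit expansion of $(u\sqrt{c}+v\sqrt{-d})^n$ (which for odd $n$ has the shape $P\sqrt{c}+Q\sqrt{-d}$ for integers $P,Q$ with $u\mid P$ and $v\mid Q$) yields $\gcd(uc,vd)=1$; moreover $\mathcal{O}_K^{\times}=\{\pm1\}$ as $cd\ne1,3$ (when $cd=1$ the extra units $\pm i$ are absorbed into a rotation of $(u,v)$). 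Finally, replacing $(u,v)$ by $(\pm u,\pm v)$ and setting $\delta:=\varepsilon\in\{\pm1\}$ lets us take $u,v>0$, which is forced anyway by $X,Y\ge1$.

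The genuinely delicate step is the third one: tracking the $2$-torsion of $\mathrm{Cl}(K)$ precisely enough to be sure the ramified part $\mathfrak{p}\mathfrak{c}$ --- morally ``$(\sqrt{2c})$'' --- recombines into exactly the normalisation $1/\sqrt{2}$ in the statement, with no leftover factor of $\sqrt{c}$ or $\sqrt{2}$. This is precisely what \cite[Corollary 3.1]{YU05} encapsulates, so in the written proof I would simply invoke that result after matching the hypotheses.
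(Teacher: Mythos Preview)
Your proposal is correct and matches the paper's approach exactly: the paper does not prove this lemma but simply records it as a special case of \cite[Corollary~3.1]{YU05}. Your additional sketch of the ideal-theoretic argument behind Yuan's result is a bonus the paper does not include.
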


\section{Proofs}
\begin{proof}[\bf Proof of Theorem \ref{thm}] Suppose that $( x, y, m, p)$ is a positive integer solution of \eqref{eqp} for the given  integers $c$ and $d$ as in Theorem \ref{thm}. We first observe that $2\nmid cdx$ since $cd$ is square-free. Also reading \eqref{eqp} modulo $4$ and then applying $cd\not\equiv 3\pmod 4$, we see that $y$ is odd.   

Since $cd$ is square-free, so that $\gcd(cx,d)=1$, and thus by Lemma \ref{leyu}, we can write:
\begin{equation}\label{eqp1}
\frac{x\sqrt{c}+d^m\sqrt{-d}}{\sqrt{2}}=\delta\left(\frac{u\sqrt{c}\pm v\sqrt{-d}}{\sqrt{2}}\right)^p,
\end{equation} 
where $u$ and $v$ are positive integers satisfying 
\begin{equation}\label{eqp2}
y=\dfrac{u^2c+v^2d}{2}
\end{equation} and $\gcd(uc, vd)=1$, and $\delta\in\{-1, 1\}$. Note that $2\nmid uv$ since $2\nmid cdxy$.
 
Assume that $\alpha=\dfrac{u\sqrt{c}+v\sqrt{-d}}{\delta\sqrt{2}}$ and $\bar{\alpha}=\dfrac{u\sqrt{c}- v\sqrt{-d}}{\delta\sqrt{2}}$. Then using \eqref{eqp2}, we check that both $\alpha$ and $\bar{\alpha}$ are algebraic integers. Also, $(\alpha+\bar{\alpha})^2=2u^2c$ and $\alpha\bar{\alpha}=\dfrac{u^2c+v^d}{2}=y$ are coprime positive integers since $\gcd(2u^2c, y)=1$. We see that $\alpha/\bar{\alpha}$ is a root of $$yZ^2-(u^2c-v^2d)Z+y=0.$$ As $\gcd(y, u^2c-v^2d)=\gcd(uc, vd)=1$, so that $\alpha/\bar{\alpha}$ is not an algebraic integer and thus it not a root of unity. Therefore $(\alpha, \bar{\alpha})$ is a Lehmer pair with the corresponding parameters $(2u^2c, -2v^2d)$. 

Suppose that $\mathcal{L}_\ell(\alpha, \bar{\alpha}), \ell\in \mathbb{N}$, is the Lehmer number corresponding to the Lehmer pair $(\alpha, \bar{\alpha})$. Then using \eqref{eqp1}, we get

\begin{equation}\label{eqp3}
\mathcal{L}_p(\alpha, \bar{\alpha})=\frac{\alpha^p-\bar{\alpha}^p}{\alpha-\bar{\alpha}}=\frac{\delta d^m}{v}.
\end{equation}  
This shows that all the prime divisors of $\mathcal{L}_p(\alpha, \bar{\alpha})$ are also the divisors of $d$. On the other hand $(\alpha^2-\bar{\alpha}^2)^2=-2u^2v^2cd$, which shows that all the divisors of $d$ are also divisors of $(\alpha^2-\bar{\alpha}^2)^2$. Therefore $\mathcal{L}_p(\alpha, \bar{\alpha})$ has no primitive divisor. 

Since $(2u^2c, -2v^2d)$ is the parameter corresponding to the Lehmer pair $(\alpha, \bar{\alpha})$, so that by Theorems \ref{thmVO} and \ref{thmBHV} there is no Lehmer number without primitive divisor for $p>7$. This shows that \eqref{eqp} has no positive integer solution when $p>7$. 

We now treat the remaining cases , viz. $p=3,5,7$, individually. For $p=7$, by Theorem \ref{thmVO} the only possibility is $(2u^2c,-2v^2d)=(14, -22)$, which gives $(c, d, u, v)=(7,11,1,1)$.  Using this in \eqref{eqp2}, we get $y=9$. Thus \eqref{eqp} reduces to 
$7x^2+11^{2m+1}=2\times 9^7$. 
We use MAGMA to solve it, which gives $(x,m)=(1169, 0)$. Therefore the corresponding solution of \eqref{eqp} is 
$(c,d,x,y,m, p)=(7,11,1169,9, 0,7)$. 

We now consider the case $p=5$. In this case, Theorem \ref{thmVO} gives
\begin{equation}\label{eqp4}
\begin{cases}
F_{k-2\varepsilon}=2u^2c,\\
F_{k-2\varepsilon}-4F_k=-2v^2d,
\end{cases}
\end{equation}
and 
\begin{equation}\label{eqp5}
\begin{cases}
L_{t-2\varepsilon}=2u^2c,\\
L_{t-2\varepsilon}-4L_t=-2v^2d,
\end{cases}
\end{equation}
where $k\geq 3$ and $t\geq 2$ are integers, and $\varepsilon=\pm 1$. Now \eqref{eqp4} gives $2F_k=u^2c+v^2d$, and thus by \eqref{eqp2}, we get $F_k=y$ with $k\geq 3$. Also  \eqref{eqp4} and Lemma \ref{flp} together give $c=F_{k-2\varepsilon}/2u^2$ and $d=L_{k+\varepsilon}/2v^2$. 

We now compare the real parts in \eqref{eqp1} for $p=5$ to get 
$$x=\frac{\delta u}{4}(u^4c^2-10u^2v^2cd+5v^4d^2).$$
Applying \eqref{eqp4} and then simplifying, we have
$$x=
\delta u\left((2F_k-F_{k-2\varepsilon})^2+4F_k^2-F_kF_{k-2\varepsilon}\right).$$
We apply Lemma \ref{flp} (precisely (i)) to get 
$$x=
u\left((2F_k-F_{k-2\varepsilon})^2+F_kL_{k+\varepsilon}\right).$$

As in the previous case, \eqref{eqp1}, \eqref{eqp2}, \eqref{eqp5} and (ii) of Lemma \ref{flp} all together give
$$(c,d,x,y)=\left(L_{t-2\varepsilon}/2u^2, 5F_{t+\varepsilon}/2v^2, u\left( (2L_t-L_{t-2\varepsilon})^2+5L_tF_{t+\varepsilon}\right), L_t\right),$$
where $t\geq 2$. 

We now consider the remaining case, $p=3$. In this case, we get by Theorem \ref{thmVO}: 
\begin{equation}\label{eqp6}
\begin{cases}
2u^2c=a+1,\\
2v^2d=3a-1,
\end{cases}
\end{equation}
and 
\begin{equation}\label{eqp7}
\begin{cases}
2u^2c=b+3^k,\\
2v^2d=3b-3^k,
\end{cases}
\end{equation}
where $a>1, b>1$ and $k>1$ are integers with $b\not\equiv 0\pmod 3$. The equations 
\eqref{eqp2} and \eqref{eqp6} together give $d=(3u^2c-2)/v^2$ and $y=2u^2c-1$. Also comparing the real and imaginary parts of \eqref{eqp1}, and then applying \eqref{eqp6}, we get $x=u(4u^2c-3)$ and $2d^m=\delta (3u^2cv-dv^3)$.
Since $v$ is a divisor of $d^m$, so that $v=d^r$ for $0\leq r\leq m$. Thus we can write the last equation as 
\begin{equation}\label{eqp8}
2d^{m-r}=\delta(3u^2c-d^{2r+1}).
\end{equation} 
This shows that $d=1, 3$, except for $r=m$.  
If $d=1$, then \eqref{eqp6} implies $3a-1=2$, which gives $a=1$ and thus $c=1$. Therefore, we have $(c,d,x,y)=(1,1,1,1)$.

Again for $d=3$, \eqref{eqp8} can be written as $2\times 3^{m-r-1}=\delta(u^2c-3^{2r})$, which implies either $r=0$ or $r=m-1$. If $r=0$, then $v=1$ and thus as before, we get $(c,d,x,y)=(1,1,1,1)$. For $r=m-1$, \eqref{eqp6} implies $2\times 3^{2m-1}=3a-1$, which is not possible. Finally for $r=m$,  we have $$(c,d,x,y)=\left(c, (3u^2c-2)/v^2, u(4u^2c-3),2u^2c-1\right),$$
where $v=d^m$ and $u\geq 1$ is any odd integer.

We now compare the imaginary parts in \eqref{eqp1}, and then use \eqref{eqp7} to get
$d=3$ and $m=k>1$. Thus using \eqref{eqp2} and \eqref{eqp7}, we get
$y=2v^2+3^{k-1}$ and $c=(v^2+2\times 3^{k-1})/u^2$. Finally, equating the real parts in \eqref{eqp1} and using these values, we have
$x=u(4v^2-2\times 3^{k-1})$. This is not possible since $x$ is odd. 
\end{proof}

\begin{proof}[\bf Proof of Theorem \ref{thm2}]
Let $p$ be a prime divisor of $n$. Then \eqref{eqn} can be written as 
\begin{equation}\label{eqn1}
\ell x^2+q^{2m+1}=2Y^p,
\end{equation}
where $Y=y^{n/p}$. Thus by Theorem \ref{thm}, \eqref{eqn1} has no solution when $p>7$, and therefore \eqref{eqn} has no solution when $n$ has a prime divisor $>7$.

For $p=7$, Theorem \ref{thm} gives $(\ell, q, x, y, m)=(7,11,1169,9, 0)$. Since $Y=y^{n/7}=9$, so that $(y,n)=(9, 7), (3, 14)$. 
 
We now consider the case $p=5$. As in \eqref{eqp1}), we get
\begin{equation}\label{eqn2}
\frac{x\sqrt{\ell}+q^m\sqrt{-q}}{\sqrt{2}}=\delta\left(\frac{u\sqrt{\ell}\pm v\sqrt{-q}}{\sqrt{2}}\right)^5,
\end{equation} 
where $u$ and $v$ are positive integers satisfying 
$Y=y^{n/5}=\dfrac{u^2\ell+v^2q}{2}$
with $\gcd(uc, vd)=1$, and $\delta\in\{-1, 1\}$. Also analogous to \eqref{eqp3}, one gets $\mathcal{L}_5(\alpha, \bar{}\alpha)=q^m/v$, which implies $v=q^r$ for some integer $0\leq r \leq m$. We first assume that $r\leq m-1$. Then the imaginary parts of \eqref{eqn2} gives $q=5$ since $\gcd(q, u\ell)=1$. Upon simplifying these imaginary parts, we get
$$u^4\ell^2-5^{2r}\times 2u^2\ell +5^{4r+1}=5^{m-r-1}\times 4\delta,$$  
which implies that $r=m-1$, and thus $v=5^{m-1}$. Therefore the above equation can be written as
$$u^4\ell^2-5^{2m-2}\times 2u^2\ell+5^{4m-3}=4\delta.$$
This can be simplified to 
$$\left(\frac{u^2\ell-5^{2m-2}}{2}\right)^2+5^{4m-4}=\pm 1.$$
Reading this modulo $4$, we can further reduce it to  $$\left(\frac{u^2\ell-5^{2m-2}}{2}\right)^2+5^{4m-4}=1, \text{ with }\ell\equiv 1\pmod 4.$$
This can be written as 
$$\left(\frac{2+5^{2m-2}-u^2\ell}{2}\right)\left(\frac{2+u^2\ell-5^{2m-2}}{2}\right)=5^{4m-4}.$$
Clearly, $(2+5^{2m-2}-u^2\ell)/2$ and $(2+u^2\ell-5^{2m-2})/2$ are coprime, so that either $(2+5^{2m-2}-u^2\ell)/2=1$ or $(2+u^2\ell-5^{2m-2})/2=1$. Both of them lead to $u^2\ell=5^{2m-2}$, which is not possible as $\gcd(u\ell, 5)=1$.

When $r=m$, we have  $|\mathcal{L}_5(\alpha, \bar{}\alpha)|=1$ and thus it has no primitive divisor. Therefore by Theorem \ref{thmVO} and Lemma \ref{flp}, we get 
\begin{equation}\label{5x}
(F_{k-2\varepsilon}, L_{k+\varepsilon})=(2u^2\ell, 2q^{2m+1}),
\end{equation}
or
\begin{equation}\label{5xx}
(L_{k-2\varepsilon}, 5F_{k+\varepsilon})=(2u^2\ell, 2q^{2m+1}).
\end{equation}
Now \eqref{5xx} gives $q=5$, and hence $F_{k+\varepsilon}=2\times 5^{2m}$. Thus by Theorem \ref{thmCO}, $(k,\varepsilon, m)=(4,-1,0),$ $ (2,1,0)$, and hence using \eqref{5xx}, we get $(u, \ell)=(1,3)$. Therefore $Y=(u^2\ell+v^2q)/2$ implies that $Y=4$, and thus \eqref{eqn1} becomes 
$$3x^2+5=2\times 4^5.$$
This equation has no integer solution. 

Now the imaginary parts of \eqref{eqn2} gives
$$5u^4\ell^2-10u^2\ell q^{2m+1}+q^{4m+2}=\pm 4.$$
This can be written as 
$$\left(\frac{q^{2m+1}-5u^2\ell}{2}\right)^2-5u^4\ell^2=\pm 1.$$
By reading this equation modulo $4$, it can be reduced to 
$$\left(\frac{q^{2m+1}-5u^2\ell}{2}\right)^2-5u^4\ell^2=-1.$$
Employing \eqref{5x}, we get 
\begin{equation}\label{5xxx}
\left(\frac{L_{k+\varepsilon}-5F_{k-2\varepsilon}}{4}\right)^2-\frac{5}{4}F_{k-2\varepsilon}^2=-1.
\end{equation}
 Again equating the real parts of \eqref{eqn2}, we get
 $$4x=|u(u^4\ell^2-10u^2\ell v^2q+5v^4q^2)|.$$
 Applying \eqref{5x}, this can be written as 
 \begin{equation*}\label{5xxxx}
 x=\frac{1}{4}\left|\sqrt{\frac{F_{k-2\varepsilon}}{2\ell}} \left(F_{k-2\varepsilon}^2-\frac{5}{2}F_{k-2\varepsilon}L_{k+\varepsilon}+\frac{5}{4}L_{k+\varepsilon}^2\right)\right|.
 \end{equation*} 
Now $y^{n/5}=(u^2\ell+v^2q)/2$ gives
\begin{equation*}\label{5xxxxx}
y=\left(\frac{F_{k-2\varepsilon}+L_{k+\varepsilon}}{4}\right)^{5/n}.
\end{equation*} 

Finally for $p=3$, we follow the previous approach to get 
$$(\ell, q,x,Y)=\left(\frac{q^{2m+1}+2}{3u^2}, q, \frac{u(4q^{2m+1}-1)}{3}, \frac{2q^{2m+1}+1}{3}\right).$$
This implies 
$$(\ell, q,x,y)=\left(\frac{q^{2m+1}+2}{3u^2}, q, \frac{u(4q^{2m+1}-1)}{3}, \left(\frac{2q^{2m+1}+1}{3}\right)^{1/2^r3^t}\right),$$
where $r\geq 0$  and $t\geq 0$ are suitable integers.   
%
%
\end{proof}

\begin{proof}[Proof of Theorem \ref{thm3}] The proof of this theorem is {\it mutatis mutandis} the same as that of Theorem \ref{thm2} when $n$ has a prime divisor $>5$. For the remaining cases, we rewrite \eqref{eqd} as 
\begin{equation}\label{eqd1}
x^2+d^{2m+1}=2Y^p,
\end{equation}
where $p\leq 5$ is a prime divisor of $n$ and $Y=y^{n/p}$. Note that both $x$ and $y$ are odd as $d\equiv 1\pmod 4$.  

Assume that $p=5$. Then by Theorem \ref{thm}, $F_{k-2\varepsilon}=2u^2$ and $L_{{t-2\varepsilon}}=2u^2$. Since $u\geq 1$ is odd, $k\geq 3$ and $t\geq 2$, so that by Theorem \ref{thmCO} we get: 
$$(k,\varepsilon, u)=(5,1,1)\text{ and } (t,\varepsilon, u)\in\left\{(2,1,1), (4,-1,3), (8,1,3)\right\}.$$

Also by Theorem \ref{thm}, we get $L_{k+\varepsilon}=2v^2d$ and $5F_{t+\varepsilon}=2v^2d$, which further imply:
$$(k,\varepsilon,u, v,d)=(5,1,1,3,1)\text{ and }(t,\varepsilon, u, v, d)\in\left\{(2,1,1,1,5), (4,-1,3,1,5), (8,1,3,1,85)\right\}.$$
Utilizing these values in 
$$x=\begin{cases}u\left((2F_k-F_{k-2\varepsilon})^2+F_kL_{k+\varepsilon}\right),\\
u\left( (2L_t-L_{t-2\varepsilon})^2+5L_tF_{t+\varepsilon}\right),
\end{cases}$$
we see that $x$ is even in each case. This is not possible, and thus \eqref{eqd1} has no solution when $p=5$. 

We now assume that $p=3$. In this case, we get by Theorem \ref{thm}: 
$$(x,Y,d, m)=(4u^3-3u, 2u^2-1, (3u^2-2)/v^2, m),$$
where $v=d^m$ and $u\geq $ is an odd integer.  
Thus the corresponding solutions of \eqref{eqd} are given by $(x,y,d,m)=\left(4u^3-3u, (2u^2-1)^{3/n}, (3u^2-2)/v^2, m\right)$, where $n=3t$ with $t$ a positive integer having only $1,2$ or $3$ its possible divisors.
\end{proof}
\section*{acknowledgements}
The author would like to thank Professor  Srinivas Kotyada and The Institute of Mathematical Sciences, Chennai for hospitality and support during the period when this work was started. 
The author thanks Professor Kalyan Chakraborty and Professor Alain Togb\'e for comments and suggestions which have improved the quality of the paper. The author is grateful to the anonymous referee for valuable suggestions and particularly for drawing the paper \cite{GM20} to his attention.
This work is supported by the grants SERB MATRICS Project No. MTR/2021/000762, Govt. of India.

\end{document}